
\documentclass[12pt]{amsart}
\usepackage{amssymb,amscd,amsmath,amsthm}

\vfuzz2pt 

 \newtheorem{thm}{Theorem}[section]
 
 \newtheorem{lem}[thm]{Lemma}
 \newtheorem{prop}[thm]{Proposition}
 \theoremstyle{definition}
 \newtheorem{defn}[thm]{Definition}
 \theoremstyle{remark}
 \newtheorem{ex}[thm]{Example}
 \theoremstyle{remark}
 \newtheorem{rem}[thm]{Remark}
 \numberwithin{equation}{section}

 \newcommand{\eps}{\varepsilon}

 \newcommand{\Real}{\mathbb{R}}

  \newcommand{\dd}[1]{\frac{d}{d #1}}
 \newcommand{\ddtwo}[1]{\frac{d^2}{d #1^2}}

 \newcommand{\ppdd}[1]{\frac{\partial^2}{\partial #1^2}}

 \newcommand{\MTW}{MTW}
 \newcommand{\cexp}{\text{c-exp}}


\newcommand{\marginnote}[1]
{
}

\newcounter{jy}

\begin{document}
\title[New examples satisfying Ma-Trudinger-Wang conditions]{New examples on spaces of negative sectional curvature satisfying Ma-Trudinger-Wang conditions}

\author{Paul W.Y. Lee}
\email{plee@math.berkeley.edu}
\address{Department of Mathematics, University of California at Berkeley, 970 Evans Hall \#3840 Berkeley, CA 94720-3840 USA}

\author{Jiayong Li}
\email{jiayong.li@utoronto.ca}
\address{Department of Mathematics, University of Toronto, Room 6290, 40 St. George Street, Toronto, Ontario, Canada M5S 2E4}

\date{\today}
\thanks{The first author was supported by the NSERC postdoctoral fellowship.}

\begin{abstract}
In this paper, we study the Ma-Trudinger-Wang (MTW) conditions for cost functions $c$ which are of the form $c=l\circ d$, where $d$ is a Riemannian distance function with constant sectional curvature. In this case, the MTW conditions are equivalent to some computable conditions on the function $l$. As a corollary, we give some new costs on Riemannian manifolds of constant negative curvature for which the MTW conditions are satisfied. 
\end{abstract}

\maketitle

\section{Introduction}

The problem of finding the most efficient strategy to transport one mass to another is called the problem of optimal transportation. More precisely, let $\mu$ and $\nu$ be two Borel probability measures on a manifold $M$ and let $c:M\times M\to\Real$ be a cost function. Let $\varphi:M\to M$ be a map which pushes $\mu$ forward to $\nu$. Here the push forward of a measure $\mu$ by a Borel map $\varphi$ is the measure defined by $\varphi_*\mu(U)=\mu(\varphi^{-1}(U))$ for all Borel sets $U$ contained in $M$. The total cost of this transport strategy is given by 
\[
\int_Mc(x,\varphi(x))d\mu(x). 
\]

The map which minimizes the above total cost is called the optimal map and this minimization problem is the optimal transportation problem (see \cite{Br,Mc,BeBu,AgLe,FiRi1} for various results on existence and uniqueness of optimal maps). Recently, there have been a series of breakthroughs in understanding regularity of this optimal map in a series of papers \cite{MaTrWa,TrWa1,TrWa2,Lo1,Lo2,KiMc1}. The key to the whole regularity theory lies in certain conditions introduced by Ma, Trudinger, and Wang, called the Ma-Trudinger-Wang (MTW) conditions \cite{MaTrWa} (see section 2 for the definitions). Very little is known about this condition and there are very few known examples which satisfy it (\cite{Lo1,KiMc2,FiRi2,LeMc}). 

In this paper, we consider cost functions $c$ which are composition of a function $l$ with a Riemannian distance function $d$ of constant sectional curvature. More precisely, $c=l\circ d$. The main theorems (Theorem \ref{2ineq} and \ref{3ineq}) give conditions on the function $l$ which are both necessary and sufficient for the corresponding cost $c=l\circ d$ to satisfy the MTW conditions. Moreover, these conditions on the function $l$ are computable, which is not the case for the MTW conditions in general. As a result, we find new examples on manifolds of constant sectional curvature $-1$ which satisfy the MTW conditions. More precisely, we have 

\begin{thm}\label{eg1}
Let $d$ be the Riemannian distance function on a manifold of constant sectional curvature $-1$, then the cost functions 
\[
-\cosh \circ d \text{ and } -\log \circ (1+\cosh) \circ d
\] 
satisfy the strong MTW condition and the cost functions 
\[
\pm \log \circ \cosh \circ d
\] 
satisfy the weak MTW condition.
\end{thm}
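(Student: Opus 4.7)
The plan is to apply Theorems \ref{2ineq} and \ref{3ineq}, which reduce the MTW conditions for a cost $c = l\circ d$ on a space of constant sectional curvature to explicit computable inequalities on $l$ and its derivatives. Specializing to curvature $-1$, where the Jacobi equation has fundamental solutions $\sinh$ and $\cosh$, the verification for each of the four candidate functions $l$ becomes a one-variable computation.

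First, I would record the first few derivatives of each candidate in closed form. For $l(r) = -\cosh r$ the derivatives cycle through $-\sinh r$, $-\cosh r$, $-\sinh r$, $-\cosh r$. For $l(r) = -\log(1+\cosh r)$, the identity $1+\cosh r = 2\cosh^2(r/2)$ yields $l'(r) = -\tanh(r/2)$, with subsequent derivatives rational in $\tanh(r/2)$ and $\mathrm{sech}(r/2)$. For $l(r) = \pm\log\cosh r$ one obtains $l'(r) = \pm\tanh r$ and $l''(r) = \pm\,\mathrm{sech}^2 r$, and so on.

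Second, I would substitute these derivative values into the inequalities produced by the two main theorems. The strong version should reduce to a strict inequality in $r = d(x,y)$, possibly multiplied by a nonnegative factor that records the angle between the tangent directions appearing in the MTW tensor; in constant curvature this angular dependence decouples, so the essential content is a single one-variable inequality in $r$. For the weak MTW condition the analogous inequality is non-strict.

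The principal obstacle is establishing the resulting one-variable inequalities uniformly on $(0,\infty)$. I expect the $-\cosh$ case to be the cleanest, since $\cosh'' = \cosh$ dovetails perfectly with the curvature-$(-1)$ Jacobi equation and most terms telescope. For the two weak-MTW examples $\pm\log\cosh$, the inequality should saturate (becoming equality) along a specific locus, which is exactly why one gets weak rather than strong MTW; the task there is to show that after simplification the expression is a nonnegative quantity vanishing only on that locus. For $-\log(1+\cosh)$, clearing denominators should reduce the inequality to a polynomial in $\cosh r$ (equivalently, in $\tanh(r/2)$) whose strict positivity one verifies either term-by-term or by writing it as a sum of squares.
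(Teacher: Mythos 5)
Your proposal follows essentially the paper's own route: Theorem \ref{eg1} is presented there as a corollary of Theorems \ref{2ineq} and \ref{3ineq} (together with Proposition \ref{a1a2}, whose hypotheses each $l$ satisfies since it is even with $l''$ of a single sign), and the verification is exactly the explicit one-variable computation of $\alpha,\beta,\gamma,\delta$ that you outline, which the paper leaves to the reader. One small correction to your expectations: for $\pm\log\circ\cosh$ one finds $A(z)=\pm(1-z^2)$ and $B(z)\equiv\pm 1$, hence $\alpha=\beta=\gamma=\delta\equiv 0$, so the weak condition holds because these quantities vanish identically (as for the Euclidean $|x-y|^2$) rather than by saturation along a special locus, while for $-\cosh$ one gets $A=B=-\sqrt{1+z^2}$ and for $-\log\circ(1+\cosh)$ all four functions are identically $-1$, yielding the strict inequalities for the strong condition.
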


\begin{rem}
For the cost $-\cosh\circ d$ on the hyperbolic space, an alternative approach to the understanding of the MTW condition can be found in \cite{Li}. It is based on the Minkowski space hyperboloid model of the hyperbolic space.
\end{rem}

On manifolds of constant sectional curvature $1$, we also find the following new example. 

\begin{thm}\label{eg2}
Let $d$ be the Riemannian distance function on a manifold of constant sectional curvature $1$, then the cost function 
\[
-\log \circ (1+\cos) \circ d
\] 
satisfies the strong MTW condition. 
\end{thm}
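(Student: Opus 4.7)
The plan is to invoke Theorem \ref{3ineq}, which by the discussion following it characterizes the strong MTW condition for a cost of the form $c = l \circ d$ on a space of constant sectional curvature by computable inequalities on $l$ together with its first few derivatives and the elementary trigonometric (in the $+1$ case, sine and cosine) functions arising from the Jacobi field equation. Since we are on a space of sectional curvature $+1$, the computable conditions will involve $\sin r$ and $\cos r$ in place of the hyperbolic sine and cosine that showed up for the curvature $-1$ examples in Theorem \ref{eg1}.

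The first concrete step is to record the derivatives of $l(r) = -\log(1+\cos r)$. Using the half-angle identity $\sin r/(1+\cos r) = \tan(r/2)$, one obtains
\[
l'(r) = \tan(r/2), \qquad l''(r) = \tfrac{1}{2}\sec^2(r/2) = \tfrac{1}{1+\cos r},
\]
and hence $l'''(r) = l'(r)\, l''(r)$, with $l^{(4)}$ expressible similarly in terms of $\sec^2(r/2)$ and $\tan(r/2)$. These remarkably clean closed forms, and in particular the factorization $l''' = l' l''$, are what make the example work; they suggest the MTW inequalities will reduce to elementary trigonometric identities rather than delicate estimates.

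The second step is to plug these derivatives into the inequalities supplied by Theorem \ref{3ineq}. Because the only curvature-dependent ingredients entering those inequalities are the Jacobi-field coefficients for constant curvature $+1$, this substitution reduces the problem to verifying one or two explicit inequalities in the single variable $r \in (0, r_0)$, where $r_0$ is the injectivity radius at which $1+\cos r$ remains positive (i.e.\ $r_0 = \pi$). The hope is that, after clearing denominators of the form $1+\cos r$, each inequality becomes a polynomial identity in $\sin r$ and $\cos r$ that either holds identically or factors into a product of manifestly nonnegative terms.

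The main obstacle I anticipate is not the algebra of these reductions but rather verifying \emph{strictness}: the strong MTW condition requires a strict inequality, so after the identities above are used one must check that the resulting expression is strictly positive on the whole interval $(0,\pi)$ and that the limiting behaviour at $r=0$ (where $l' \to 0$ but $l''$ stays bounded) does not degenerate. This is expected to follow because the factorization $l''' = l' l''$ causes the would-be obstruction terms to combine favorably, mirroring the way $-\log\circ(1+\cosh)\circ d$ works in the curvature $-1$ case of Theorem \ref{eg1}; indeed $-\log(1+\cos r)$ is the natural analytic continuation of $-\log(1+\cosh r)$, so once Theorem \ref{3ineq} is in hand the verification should run in parallel to that example.
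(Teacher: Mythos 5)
Your overall route is the paper's route: Theorem \ref{eg2} is indeed obtained by feeding $l=-\log\circ(1+\cos)$ into the characterization of Theorem \ref{3ineq} (and Theorem \ref{2ineq} in dimension two), after checking via Proposition \ref{a1a2} that $l$ is even with $l''=\frac{1}{1+\cos r}>0$ outside the cut locus. Your derivative formulas $l'(r)=\tan(r/2)$, $l''(r)=\frac{1}{1+\cos r}$, $l'''=l'l''$ are correct. The genuine gap is that the proposal stops exactly where the content of the proof begins: Theorem \ref{3ineq} is stated not in terms of $l$ and $r$ but in terms of the functions $A(z)=1/h'(z)$ and $B(z)=z\cot(h(z))$ of the momentum variable $z=l'(r)\in[0,|l'(D)|]$, with $h=(l')^{-1}$, and you never compute these, nor the four functions $\alpha,\beta,\gamma,\delta$, nor verify the strict inequalities. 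Your anticipated difficulty (strictness near $r=0$, and implicitly the fact that $z=\tan(r/2)\to\infty$ as $r\to\pi$, so one needs control on an a priori large $z$-range) is a symptom of not having done this computation, and in fact it dissolves completely once it is done.

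Concretely, from $z=\tan(r/2)$ one gets $h(z)=2\arctan z$, hence
\begin{equation*}
A(z)=\frac{1}{h'(z)}=\frac{1+z^2}{2},\qquad B(z)=z\cot(2\arctan z)=\frac{1-z^2}{2},
\end{equation*}
so that $A-B=z^2$, $A'-B'=2z$, $A''=1$, $B''=-1$, and the formulas of Proposition \ref{MTWsimple} give
\begin{equation*}
\alpha(z)=\frac{z^2+6z^2-8z^2}{z^2}=-1,\quad \beta(z)=\frac{z^2-2z^2}{z^2}=-1,\quad \gamma(z)=-1,\quad \delta(z)=-1
\end{equation*}
identically. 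Thus $\beta,\gamma,\delta<0$ and $\alpha+\delta=-2<2=2\sqrt{\beta\gamma}$ hold strictly for every $z$, so the strict versions of the inequalities in Theorems \ref{2ineq} and \ref{3ineq} are satisfied and \textbf{(A3s)} follows; there is no limiting issue at $r=0$ or as $r\to\pi$ because the four functions are constant. Without some version of this computation (or an equivalent verification in the $r$-variable), the proposal is a plan rather than a proof; with it, the argument coincides with the paper's.
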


It is known that the square of the Euclidean distance $|x-y|^2$ satisfies the weak MTW condition. In the final section, we give sufficient conditions for a perturbation of the form $l_\epsilon(|x-y|)$ with $l_0(z)=z^2$ to satisfy the strong MTW condition (Theorem \ref{perturb}). As a corollary, we obtain the following. 

\begin{thm}\label{eg3}
If we fix a positive constant $b$ and let 
\[
l_\eps(z) = z^2/2 -\eps z^4,
\] 
then the costs 
\[
l_\eps(|x-y|)
\] 
satisfy the strong MTW condition on the set $\{ (x,y) \in \mathbb{R}^{2n}: |x-y| \leq b \}$ for all sufficiently small $\eps>0$.
\end{thm}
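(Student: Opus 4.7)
The plan is to deduce Theorem \ref{eg3} as a direct corollary of Theorem \ref{perturb} by verifying its hypotheses for the specific one-parameter family $l_\eps(z) = z^2/2 - \eps z^4$ on the interval $[0, b]$. First I record the relevant derivatives: for $l_0(z) = z^2/2$ one has $l_0'' = 1$ and all higher derivatives vanish, so $l_0$ contributes nothing to the MTW tensor (this is the usual Euclidean square-distance cost, which saturates the weak MTW condition). Writing $m(z) = -z^4$, the perturbation satisfies $m'(z) = -4z^3$, $m''(z) = -12z^2$, $m'''(z) = -24z$, $m''''(z) = -24$. On $\{|x-y| \le b\}$ and for $\eps < 1/(12b^2)$, one has $l_\eps''(z) = 1 - 12\eps z^2 > 0$ uniformly, which ensures that $l_\eps(|x-y|)$ defines a non-degenerate cost.

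Next I substitute these derivatives into the positivity conditions supplied by Theorem \ref{perturb}. Since the unperturbed MTW tensor vanishes identically, the leading behavior of the MTW tensor for $l_\eps$ is $\eps$ times an expression linear in the derivatives of $m$, with the $O(\eps^2)$ corrections coming from expanding $1/l_\eps''$ and cross-terms. The specific shape $m(z) = -z^4$ is chosen so that the signs of $m''$, $m'''$, and the constant $m'''' = -24$ line up to make the linear-in-$\eps$ piece strictly positive on the orthogonal cone of tangent vectors; intuitively, the nonzero constant $m''''$ is what breaks the degeneracy of the weak MTW inequality in the right direction.

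The main obstacle will be checking that the positivity hypothesis of Theorem \ref{perturb} does indeed translate, after substituting the explicit derivatives of $l_\eps$, into a pointwise inequality of the form $A_0(z) + \eps A_1(z, \eps) > 0$ for $z \in [0, b]$, where $A_0$ is strictly positive (reflecting the uniform positivity of the leading-order MTW contribution from $m$) and $A_1$ is a rational expression in $z$ and $\eps$ that is uniformly bounded on $[0, b] \times [0, \eps_0]$ for some $\eps_0 > 0$. Granted this, compactness of $[0, b]$ and continuity in $\eps$ produce a threshold $\eps_0 > 0$ below which the inequality holds uniformly, giving strong MTW for all sufficiently small $\eps > 0$. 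The essential calculation is therefore the verification of the strict positivity of $A_0$ on $[0, b]$; this is a one-variable polynomial inequality and should follow directly from the signs $m'' \le 0$ and $m'''' < 0$ together with the explicit form of the MTW tensor in the Euclidean setting used in the proof of Theorem \ref{perturb}.
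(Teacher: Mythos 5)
There is a genuine gap: you never actually verify the hypotheses of Theorem \ref{perturb}, and in fact you seem to have misread what those hypotheses are. Theorem \ref{perturb} does not ask you to re-derive any positivity of the MTW tensor or to establish an expansion of the form $A_0(z)+\eps A_1(z,\eps)>0$; that expansion (linearizing the coefficients $\alpha_\eps,\beta_\eps,\gamma_\eps,\delta_\eps$ in $\eps$ around the flat cost and invoking Theorem \ref{3ineq}) is precisely the content of the \emph{proof} of Theorem \ref{perturb}, already done in the paper. What the theorem asks of you is entirely concrete: form $f(z)=\partial_\eps l'_\eps(z)/z\big|_{\eps=0}$ and check that $f''(z)<k$ and $\bigl(z^2f'''(z)-zf''(z)+2f'(z)\bigr)/z<k$ on $(0,b]$ for some negative constant $k$. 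Your proposal never writes down $f$, never checks either inequality, and defers "the essential calculation" to an unverified claim that it "should follow" from the signs of $m''$ and $m''''$ (the latter of which does not even enter the hypotheses directly: $f''$ is built from $m'$, $m''$, $m'''$). Since the verification of these two inequalities \emph{is} the entire proof of Theorem \ref{eg3}, omitting it leaves nothing established.

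The missing computation is short. Here $l'_\eps(z)=z-4\eps z^3$, so $f(z)=-4z^2$, hence $f'(z)=-8z$, $f''(z)=-8$, $f'''(z)=0$, and
\[
\frac{z^2f'''(z)-zf''(z)+2f'(z)}{z}=\frac{8z-16z}{z}=-8 .
\]
Both quantities equal $-8$ on all of $(0,b]$, so any $k\in(-8,0)$, say $k=-1$, satisfies conditions (1) and (2) of Theorem \ref{perturb}, and the conclusion follows for all sufficiently small $\eps>0$. (Your side remark that $l_\eps''>0$ for $\eps<1/(12b^2)$ is true but redundant: non-degeneracy for small $\eps$ is already supplied inside the proof of Theorem \ref{perturb} via Proposition \ref{a1a2}.) Also note that the hypotheses demand a uniform negative upper bound $k$, not merely pointwise negativity; in this example that is automatic because the expressions are constant, but your compactness-plus-continuity framing should be tied to that uniform bound rather than to strict positivity of an undetermined $A_0$.
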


\

\section{Background: The Ma-Trudinger-Wang curvature}

In this section we will review some basic results from the theory of optimal transportation needed in this paper. The theorems stated in this section hold true with more relaxed assumptions and they can be found, for instance, in \cite{Vi}.

Let $\mu_1$ and $\mu_2$ be two Borel probability measures of the manifold $M$. Assume that the support of the measures $\mu$ and $\nu$ are contained in the open subsets $\mathcal M_1$ and $\mathcal M_2$, respectively, of $M$. The optimal transportation problem corresponding to the cost function $c:M\times M\to\Real$ is the following minimization problem:

Minimize the functional
\[
 \int_Mc(x,\varphi(x))d\mu(x)
\]
among all Borel maps $\varphi:M\to M$ which push forward the measure $\mu$ to the other measure $\nu$ (i.e. $\nu(\varphi^{-1}(U))=\mu(U)$ for Borel subsets $U$ in the manifold $M$). 

Minimizers of the above optimal transportation problem are called optimal maps. To study existence, uniqueness, and regularity of optimal maps, we need the following basic assumptions on the cost $c$ and the sets $\mathcal M_1$ and $\mathcal M_2$. 
\begin{itemize}
\item \textbf{(A0) Smoothness:} the cost $c$ is $C^4$ smooth on the product $\mathcal M_1\times\mathcal M_2$, 
\item \textbf{(A1) Twist condition:} for each fixed $x$ in the set $\mathcal M_1$, the map $y \longmapsto -\partial_x c(x,y)$ from the set $\mathcal M_2$ to the cotangent space $T^*_x M$ at $x$ is injective, 
\end{itemize}

The above two conditions are motivated by the following result, which can be found in \cite{Vi}. 
 
\begin{thm}\label{optimal}(Existence and uniqueness of optimal maps)
Assume that the measure $\mu$ is absolutely continuous with respect to the Lebesgue measure and the cost $c$ satisfies the assumptions \textbf{(A0)} and \textbf{(A1)}. Then there is a Lipschitz function $f$ such that the map 
\[
z\mapsto (-\partial_xc)^{-1}(df_z)
\]
is a solution to the above optimal transportation problem. Moreover, it is unique $\mu$-almost everywhere.
\end{thm}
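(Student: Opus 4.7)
The plan is to pass to the Kantorovich relaxation, extract a $c$-concave potential whose $c$-subdifferential contains the support of an optimal plan, and then use the twist condition to invert the map $y\mapsto-\partial_xc(x,y)$. First I would minimize the relaxed functional $\pi\mapsto\int c\,d\pi$ over probability measures $\pi$ on $M\times M$ with prescribed marginals $\mu$ and $\nu$. Existence of a minimizer $\pi_*$ follows from the tightness of the set of admissible couplings combined with lower semicontinuity of the cost functional, which uses the continuity supplied by \textbf{(A0)}.

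Next, I would invoke the standard fact that the support of any minimizer $\pi_*$ is $c$-cyclically monotone and, via a Rockafellar-type argument, is contained in the $c$-subdifferential $\partial^c f$ of some $c$-concave function $f$, i.e.
\[
f(x)+f^c(y)=-c(x,y)\quad\text{for }(x,y)\in\mathrm{supp}(\pi_*),
\]
with equality replaced by $\le$ otherwise. Using \textbf{(A0)} and a localization/truncation over neighborhoods in $\mathcal M_1\times\mathcal M_2$, one shows that $f$ is Lipschitz; this is the technical step that carries the most weight.

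Now fix $x\in\mathcal M_1$ where $f$ is differentiable. For any $y$ with $(x,y)\in\mathrm{supp}(\pi_*)$, the function $x'\mapsto f(x')+c(x',y)$ attains its maximum at $x'=x$, so $df_x=-\partial_xc(x,y)$. By \textbf{(A1)} this equation has at most one solution, so $y$ is uniquely determined by $x$, namely $y=T(x):=(-\partial_xc)^{-1}(df_x)$. Because $\mu$ is absolutely continuous, Rademacher's theorem gives differentiability of $f$ $\mu$-a.e., so $T$ is well-defined $\mu$-a.e. and $\pi_*$ is concentrated on the graph of $T$; pushing forward the first marginal then yields $T_\#\mu=\nu$, so $T$ solves the Monge problem.

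For uniqueness, any optimal Monge map $\varphi$ induces the optimal plan $(\mathrm{id}\times\varphi)_\#\mu$, whose support again lies in the $c$-subdifferential of a $c$-concave potential; applying \textbf{(A1)} at points of $\mu$-a.e. differentiability forces $\varphi=T$ off a $\mu$-null set. The main obstacle I anticipate is the Lipschitz regularity of $f$: without compactness of $\mathcal M_1,\mathcal M_2$ one needs to restrict to the effective domain of $f$ and argue by local boundedness of $\partial_xc$ to apply Rademacher's theorem and justify the inversion step.
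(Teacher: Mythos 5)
The paper does not prove this theorem at all: it is quoted as a known result and attributed to Villani's book \cite{Vi} (the text preceding it says the statement ``can be found, for instance, in \cite{Vi}''). So there is no internal proof to compare against; your sketch is precisely the standard Kantorovich-duality argument that the cited reference carries out (relaxation, $c$-cyclical monotonicity, Rockafellar-type construction of a $c$-concave potential, Lipschitz bounds from \textbf{(A0)} on a localization, Rademacher plus absolute continuity of $\mu$, and the twist condition \textbf{(A1)} to invert $y\mapsto-\partial_xc(x,y)$ at points of differentiability). At the level of an outline this is the right proof, and your identification of the Lipschitz regularity of $f$ as the technically heavy step is accurate.

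One step in your sketch is genuinely loose: the uniqueness argument. You let an arbitrary optimal Monge map $\varphi$ come with \emph{its own} $c$-concave potential $\widetilde f$, and then conclude $\varphi=T$ from \textbf{(A1)}. But the first-order condition only gives $\varphi(x)=(-\partial_xc)^{-1}(d\widetilde f_x)$, which need not equal $(-\partial_xc)^{-1}(df_x)$ unless $d\widetilde f=df$ $\mu$-a.e., and nothing you wrote forces that. The standard repairs are either (i) the averaging trick: if $\pi_1,\pi_2$ are optimal plans, so is $\frac{1}{2}(\pi_1+\pi_2)$, and running your graph argument on the average (with a \emph{single} potential) forces the two graph maps to coincide $\mu$-a.e.; or (ii) use Kantorovich duality to note that one dual-optimal pair $(f,f^c)$ satisfies $f(x)+f^c(y)=-c(x,y)$ on the support of \emph{every} optimal plan, so the same $f$ serves for all competitors. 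With either fix the argument closes; as written, the final ``forces $\varphi=T$'' does not follow.
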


The map $\alpha\mapsto (-\partial_xc)^{-1}(\alpha)$ appeared in Theorem \ref{optimal} is called the cost exponential map. More precisely, let $\mathcal V_x$ be the subset of the cotangent space $T_x^* M$ defined by
\[
\mathcal V_x = \{-\partial_xc(x,y)\in T_x^* M|\ y \in \mathcal M_2\}
\]
and let $\mathcal V$ be the corresponding bundle defined by $\mathcal V =\bigcup_{x\in \mathcal M_1}\mathcal V_x$.

\begin{defn}[Cost exponential map]
The cost exponential map $\cexp_x:\mathcal V_x \rightarrow M$ is defined to be the inverse of the map $y  \mapsto -\partial_x c(x, y)$ (i.e. $\text{c-exp}_x(\alpha) = y \text{ if and only if } \alpha = -\partial_x c(x,y)$). We will denote the cost exponential map by $\cexp$ if we consider it as a map defined on the bundle $\mathcal V$. 
\end{defn}

\begin{ex} \label{dsquared} 
Let $M$ be a complete Riemannian manifold with Riemannian metric $\left<\cdot,\cdot\right>$. Let $x$ be a point on the manifold $M$ and let $\exp_x$ be the exponential map restricted to the tangent space $T_xM$. Let $U_x$ be the largest open subset of $T_xM$ containing the origin on which the restriction of the exponential map $\exp_x$ is a diffeomorphism onto its image. The cut locus $\text{cut}(x)$ at the point $x$ is the complement of the image $\exp_x(U_x)$. We will denote the union of all the cut locus in $M$ by $\text{cut}(M)$. More precisely, it is a subset of the product manifold $M\times M$ defined by $\text{cut}(M)=\bigcup_{x\in M}\{x\}\times \text{cut}(x)$. 

Let $d$ be the Riemannian distance function. It is known that the cost function $c=d^2/2$ is smooth outside the cut locus $\text{cut}(M)$. Moreover, if we identify the tangent bundle with the cotangent bundle by the Riemannian metric $\left<\cdot,\cdot\right>$, then the exponential map and the cost exponential map coincide (see Lemma \ref{cexp} for a proof). Therefore, the cost $d^2/2$ satisfies conditions \textbf{(A0)} and \textbf{(A1)} on any set $\mathcal M_1\times \mathcal M_2$  which is outside the cut locus $\text{cut}(M)$ (i.e. $\mathcal M_1\times \mathcal M_2\subseteq M\times M\setminus \text{cut}(M))$. 
\end{ex}

For the regularity theory of optimal maps, we also need the cost exponential map $\cexp$ to be smooth. More precisely, 
\begin{itemize}
\item \textbf{(A2) Non-degeneracy:}
the map $p \mapsto -\partial_y\partial_xc(x,y)(p)$ from the tangent space $T_y M$ to the cotangent space $T_x^* M$ is bijective for all pairs of points $(x,y)$ in the set $\mathcal M_1\times\mathcal M_2$.
\end{itemize}

Next, we introduce the most important object in the regularity theory of optimal maps, called the Ma-Trudinger-Wang (MTW) curvature.

\begin{defn}[The Ma-Trudinger-Wang curvature] \label{MTWdefn}
The MTW curvature $\MTW :TM\oplus\mathcal V\oplus T^*M\to\Real$ is defined by 
\[
\MTW_x(u,\alpha,\beta):=-\frac{3}{2}\partial^2_s\partial^2_t\Big|_{s=t=0} c(\gamma(t),\cexp_x(\alpha+s\beta))
\]
where $\gamma(\cdot)$ is any curve with initial velocity $\dot\gamma(0)=u$. 
\end{defn}

Finally, the main assumptions, the MTW conditions, are defined using the MTW curvature as follows: 

\begin{itemize}
\item \textbf{(A3w) Weak MTW condition:} the cost $c$ satisfies the weak MTW condition on $\mathcal M_1\times\mathcal M_2$ if $\MTW_x(u,\alpha,\beta)\geq 0$ whenever $\alpha$ is contained in $\mathcal V_x$ and $\beta(u)=0$. 
\item \textbf{(A3s) Strong MTW condition:} the cost $c$ satisfies the strong MTW condition on $\mathcal M_1\times\mathcal M_2$ if it satisfies the weak MTW condition and $\MTW_x(u,\alpha,\beta)=0$ only if  $u=0$ or $\beta=0$. 
\end{itemize}
The relevance of the MTW conditions to the regularity theory of optimal maps can be found in \cite{MaTrWa,TrWa1,TrWa2,Lo1,Lo2,KiMc1,FiLo,LoVi,FiKiMc}. Other variants of the MTW conditions which are related to the regularity theory of optimal maps can be found in \cite{FiRi2,FiRiVi1,FiRiVi2}.  

Next, we consider cost functions $c$ on Riemannian manifolds which are composition of the Riemannian distance function $d$  by a smooth function $l$. More precisely, $c(x,y) = l(d(x,y))$. We end this section by stating the following theorem for which the proof will be given in the appendix. It provides simple conditions on the function $l$ which guarantee the conditions \textbf{(A0)} - \textbf{(A2)} are satisfied by the cost $c=l\circ d$. For the convenience of notations, we will consider $l$ as a function defined on the whole real line $\mathbb R$. Note that, in the theorem, we identify the tangent and the cotangent bundle of the manifold $M$ using the given Riemannian metric. This identification will be applied thought out this paper without mentioning.

\begin{prop} \label{a1a2}
Assume that the function $l$ is a smooth even function for which the second derivative is either positive or negative (i.e. either $l''>0$ or $l''<0$), then the cost $c = l\circ d$ satisfies conditions \textbf{(A0)} - \textbf{(A2)} on each subset $\mathcal M_1\times \mathcal M_2$ outside the cut locus $\text{cut}(M)$. Moreover, the cost exponential map $\cexp$, in this case, is given by 
\[
\cexp_x(v) = \exp_x\left(\frac{(l')^{-1}(|v|)}{|v|}v\right). 
\]
\end{prop}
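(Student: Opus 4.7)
The plan is to reduce each of (A0)--(A2) to an elementary statement about the one-variable function $l$, combined with standard facts about the Riemannian exponential map outside the cut locus. The evenness of $l$ enters through the factorization $l(s) = g(s^2)$ for some smooth $g$, while the sign hypothesis on $l''$ delivers strict monotonicity of $l'$ on $[0,\infty)$ and in particular $l'(s)/s \neq 0$ everywhere.

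For (A0), the identity $c(x,y) = g(d(x,y)^2)$ together with the smoothness of $d^2$ off the cut locus does the job (including at the diagonal $y = x$, where $d$ alone is not smooth). For (A1), identifying $T^*M$ with $TM$ via the metric, I would use the standard fact that for $v = \exp_x^{-1}(y)$ the gradient of $x \mapsto d(x,y)$ at $x$ equals $-v/|v|$ to compute
\[
-\partial_x c(x,y) \;=\; \frac{l'(|v|)}{|v|}\, v \;=\; 2g'(|v|^2)\, v,
\]
which extends smoothly through $v = 0$. Since $l'(0) = 0$ and $l''$ has constant sign, $|l'|$ is strictly increasing on $[0,\infty)$, so the norm of the displayed vector determines $|v|$ and its direction then determines $v/|v|$; injectivity follows. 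Inverting this recipe, using that $(l')^{-1}$ is odd because $l'$ is, gives exactly the stated formula for $\cexp_x$.

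For (A2), differentiating the defining identity $-\partial_x c(x,\cexp_x(\alpha)) = \alpha$ in $\alpha$ yields
\[
-\partial_y\partial_x c(x,\cexp_x(\alpha)) \circ d(\cexp_x)_\alpha \;=\; \mathrm{Id},
\]
so non-degeneracy reduces to $\cexp_x$ being a local diffeomorphism. Writing $\cexp_x = \exp_x \circ \Phi_x$ with $\Phi_x(\alpha) = \frac{(l')^{-1}(|\alpha|)}{|\alpha|}\alpha$, the outer factor is a diffeomorphism off the cut locus, while the Jacobian of the radial map $\Phi_x$ is controlled by $l''$ in the radial direction and by $l'(r)/r = 2g'(r^2)$ in the tangential directions, both of which are nonzero by the sign hypothesis on $l''$.

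The main delicate point is handling the diagonal $v = 0$, where $d$ itself fails to be differentiable and so direct manipulation of $l'(|v|)/|v|$ is unjustified. This is precisely where evenness of $l$ is indispensable: the substitution $l(s) = g(s^2)$ turns $l'(|v|)/|v|$ into the manifestly smooth $2g'(|v|^2)$, and the identity $l''(0) = 2g'(0) \neq 0$ ensures that $\Phi_x$ has invertible differential even at the origin, so the argument extends across the diagonal without a separate case analysis.
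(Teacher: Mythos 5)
Your proposal is correct and follows essentially the same route as the paper: compute $-\partial_x c(x,\exp_x(u))=\frac{l'(|u|)}{|u|}u$ from the gradient of the distance (the paper gets this from a lemma identifying $\nabla_x\tfrac{1}{2}d^2(\cdot,y)$ with minus the initial geodesic velocity), invert using strict monotonicity of $l'$ to obtain the stated formula for $\cexp_x$, and deduce \textbf{(A2)} from the smoothness of $\cexp$ together with the chain-rule identity, with evenness of $l$ handling the diagonal exactly as in the paper's $s\mapsto l(\sqrt{s})$ factorization. Only a cosmetic slip: the tangential factor in $d\Phi_x$ is $(l')^{-1}(r)/r$, not $l'(r)/r=2g'(r^2)$ (the latter is the corresponding factor of the forward map $u\mapsto \frac{l'(|u|)}{|u|}u$); both are nonvanishing under your hypotheses, so the conclusion is unaffected.
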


For the rest of this paper, we will work under the assumptions of Proposition \ref{a1a2}. 

\

\section{The Ma-Trudinger-Wang curvature and the Jacobi map}

In this section we write down the the MTW curvature in terms of the Jacobi fields. To do this, let us recall the definition of the Jacobi map introduced in \cite{LeMc}. Let $x$ and $y$ be two points on the manifold $M$ which can be connected by a unique minimizing geodesic $\gamma(\cdot)$ (i.e. $\gamma(0)=x$ and $\gamma(1)=y$). Jacobi fields defined along the geodesic $\gamma$ are solutions to the following Jacobi equation: 
\begin{equation} \label{Jacobieqn}
D_{\dd \tau}D_{\dd \tau} J(\tau) + R(\gamma'(\tau),J(\tau))\gamma'(\tau) = 0,
\end{equation}
where $D$ denotes the covariant derivative. 

This second order ordinary differential equation has a unique solution if we prescribe either its boundary values $J(0)$ and $J(1)$, or its initial values $J(0)$ and $D_{\dd\tau} J(0)$. The Jacobi map is defined as the map which takes the boundary conditions and gives the initial conditions. More precisely, 

\begin{defn}
Let $J(\cdot)$ be the Jacobi field along the geodesic $\gamma(\cdot)$ defined by the conditions $J(0)=u$, $J(1)=0$, and $J(\tau)\neq 0$ for $0<\tau < 1$. The \textbf{Jacobi map} $\mathcal J$ is given by 
\[
\mathcal J(u,y) = D_{\dd\tau} J(0).
\]
\end{defn}

Let $l$ be a function which satisfies the assumptions in Theorem \ref{a1a2}. According to Definition \ref{MTWdefn} and Theorem \ref{a1a2}, the MTW curvature of  the cost function $c=l\circ d$ is given by 
\[
\MTW(u,v,w)=-\frac{3}{2}\partial^2_s\partial^2_t \,l(d(\exp(tu), \sigma(s)))\Bigg|_{s=t=0},
\]
where $\sigma(\cdot)$ is the curve defined by $\sigma(s)=\exp\left(\frac{(l')^{-1}(|v+sw|)}{|v+sw|}(v+sw)\right)$.

The connection between the MTW curvature and the Jacobi map $\mathcal J$ is given by the following theorem.
\begin{thm} \label{cross}
The MTW curvature is given in terms of the Jacobi map $\mathcal J$ by
\[
\begin{split}
\MTW(u,v,w)=
&\frac{3}{2}\ppdd s\Bigg|_{s=0} \Bigg[\frac{|v+sw|}{h(|v+sw|)}\left<u,\mathcal J(u,\sigma(s))\right>- \\
& -\frac{\left< v+sw,u\right>^2}{|v+sw|^2h'(|v+sw|)}+\frac{\left< v+sw,u\right>^2}{|v+sw|h(|v+sw|)}\Bigg], 
\end{split}
\]
where $h$ is the inverse of the function $l'$. 
\end{thm}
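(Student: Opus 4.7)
Write $r(s,t) = d(\exp(tu), \sigma(s))$, so that by definition
\[
\MTW(u,v,w) = -\tfrac{3}{2}\,\partial_s^2 \partial_t^2 \Big|_{s=t=0} l(r(s,t)),
\]
and the chain rule gives
\[
\partial_t^2 l(r(s,t))\Big|_{t=0} = l''(r(s,0))\,(\partial_t r)^2 + l'(r(s,0))\,\partial_t^2 r.
\]
The proof then reduces to evaluating the three ingredients $r(s,0)$, $\partial_t r(s,0)|_{t=0}$, $\partial_t^2 r(s,0)|_{t=0}$ and finally applying $-\tfrac{3}{2}\,\partial_s^2|_{s=0}$.

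The formula for $\cexp$ in Proposition \ref{a1a2} exhibits $\sigma(s)$ as $\exp_x$ of a vector of norm $h(|v+sw|)$, where $h = (l')^{-1}$, so $r(s,0) = h(|v+sw|)$. Differentiating $l' \circ h = \mathrm{id}$ then yields $l'(r(s,0)) = |v+sw|$ and $l''(r(s,0)) = 1/h'(|v+sw|)$. For the first-order term, I apply the first variation of arc length to the family $t\mapsto \gamma_{s,t}$ of minimizing geodesics from $\exp(tu)$ to $\sigma(s)$, using that the initial unit tangent of $\gamma_{s,0}$ is $(v+sw)/|v+sw|$, to obtain $\partial_t r(s,0)|_{t=0} = -\langle (v+sw)/|v+sw|, u\rangle$.

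For the second derivative, I observe that $J(\tau) = \partial_t \gamma_{s,t}(\tau)|_{t=0}$ is the Jacobi field along $\gamma_{s,0}$ with $J(0) = u$ and $J(1) = 0$, so $D_\tau J(0) = \mathcal J(u, \sigma(s))$ by definition of $\mathcal J$. The cleanest route is to work with the energy $E(t) = L(t)^2/2$, whose first variation $\partial_t E = \langle \partial_t \gamma, \dot\gamma\rangle|_0^1$ holds whenever $\gamma_t$ is a geodesic. Differentiating once more in $t$ and using that $t\mapsto \exp(tu)$ is a geodesic (so $D_t\partial_t \gamma = 0$ at $\tau=0$) together with the fact that $\sigma(s)$ is constant in $t$ (so the $\tau=1$ boundary contribution vanishes identically) yields $\partial_t^2 E|_{t=0} = -\langle u, \mathcal J(u, \sigma(s))\rangle$. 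The identity $L\,\partial_t^2 L = \partial_t^2 E - (\partial_t L)^2$ then gives $\partial_t^2 r(s,0)|_{t=0}$ in closed form.

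Assembling the three pieces and simplifying yields
\[
\partial_t^2 l(r(s,t))\Big|_{t=0} = \frac{\langle u, v+sw\rangle^2}{|v+sw|^2 h'(|v+sw|)} - \frac{|v+sw|}{h(|v+sw|)}\,\langle u, \mathcal J(u,\sigma(s))\rangle - \frac{\langle u, v+sw\rangle^2}{|v+sw|\,h(|v+sw|)},
\]
and applying $-\tfrac{3}{2}\partial_s^2|_{s=0}$ produces the stated formula. The main obstacle is the sign and scalar bookkeeping in the second-variation step, especially the conversion between derivatives of $E$ and $L$; the key conceptual move, after which the Jacobi map enters automatically, is the identification of the variation field of the two-parameter geodesic family as a Jacobi field with the prescribed boundary data $J(0) = u$, $J(1) = 0$.
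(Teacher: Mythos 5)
Your proposal is correct and follows essentially the same route as the paper: both proofs rest on the two\--parameter family of geodesics from $\exp(tu)$ to $\sigma(s)$, the first\--variation formula (which the paper packages as Lemma \ref{cexp} for the gradient of $\tfrac12 d^2$), and the identification of $\tau\mapsto\partial_t\varphi|_{t=0}$ as the Jacobi field with boundary data $J(0)=u$, $J(1)=0$, so that $D_\tau\partial_t\varphi|_{\tau=t=0}=\mathcal J(u,\sigma(s))$. The only organizational difference is bookkeeping: the paper differentiates $l\circ d$ directly and isolates the term $H(t,s)$, whereas you split $\partial_t^2\,l(r)=l''(r)(\partial_t r)^2+l'(r)\partial_t^2 r$ and compute $\partial_t^2 r$ through the energy $E=r^2/2$; your assembled expression for $-\partial_t^2\,l(r)|_{t=0}$ agrees with the paper's (\ref{cross2})--(\ref{cross5}).

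One caveat: as written, your intermediate identities $r(s,0)=h(|v+sw|)$, $l'(r(s,0))=|v+sw|$, and ``the initial unit tangent is $(v+sw)/|v+sw|$'' hold only when $l''>0$, i.e.\ when $h\geq 0$ on $[0,\infty)$. Under the standing hypotheses of Proposition \ref{a1a2} one may have $l''<0$ (this is exactly the case of the paper's main examples such as $-\cosh\circ d$), in which case $r(s,0)=|h(|v+sw|)|=-h(|v+sw|)$, $l'(r(s,0))=-|v+sw|$, and the unit tangent is $-(v+sw)/|v+sw|$. The final formula is unaffected because only the combinations $l''(r)$, $(\partial_t r)^2$, and $l'(r)/r$ enter, each of which is invariant under $h\mapsto -h$ (using that $l'$ is odd and $l''$ is even); this is precisely the point where the paper inserts absolute values and invokes the oddness of $l'$, and you should add the corresponding sentence to cover the $l''<0$ case.
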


\begin{proof}
Let us denote the geodesic $t\mapsto\exp_x(tu)$ by $\gamma(t)$ and let $\tau \mapsto \varphi(\tau,t,s)$ be the constant speed geodesic starting from $\gamma(t)$ and ending at $\sigma(s)$ (i.e. $\varphi(0,t,s)=\gamma(t)$ and $\varphi(1,t,s)=\sigma(s)$). By Lemma \ref{cexp}, we get
\[
-\partial_t\, l(d(\gamma(t),\sigma(s))) 
= \frac{l'(d(\gamma(t),\sigma(s)))}{d(\gamma(t),\sigma(s))} \left< \dot \gamma(t), \partial_\tau\varphi\right>\Big|_{\tau=0},
\]

If we differentiate the above equation with respect to $t$ again and apply the torsion free condition of covariant derivative, then we have 
\begin{equation}\label{cross1}
\begin{split}
&-\partial^2_t\,l(d(\gamma(t),\sigma(s)))\Big|_{t=0}\\
&= H(t,s)\left< u, \partial_\tau\varphi\right> 
+ \frac{l'(d(x,\sigma(s)))}{d(x,\sigma(s))} \left< u, D_{\partial_\tau} \partial_t\varphi\right>\Big|_{t=\tau=0},
\end{split}
\end{equation}
where $H(t,s)=\partial_t \left( \frac{l'(d(\gamma(t),\sigma(s)))}{d(\gamma(t),\sigma(s))} \right)$. 

For each fixed $s$, the set of curves defined by $\tau \mapsto \varphi(\tau,t,s)$ is a family of geodesics between $\gamma(t)$ and the point $\sigma(s)$. Therefore, $\tau \mapsto \partial_t \varphi\Big|_{t=0}$ defines a Jacobi field. Moreover, this Jacobi field has boundary values $\partial_t \varphi\Big|_{t=\tau=0} = u$ and $\partial_t\varphi\Big|_{t=0,\tau=1}= 0$. Therefore, by the definition of the Jacobi map, (\ref{cross1}) becomes 
\begin{equation}\label{cross1-2}
\begin{split}
&-\partial^2_t\,l(d(\gamma(t),\sigma(s)))\Big|_{t=0}\\
&= H(t,s)\left< u, \partial_\tau\varphi\right>\Big|_{t=\tau=0} 
+ \frac{l'(d(x,\sigma(s)))}{d(x,\sigma(s))} \left< u, \mathcal J(u,\sigma(s))\right>.
\end{split}
\end{equation}

Note that $\partial_\tau\varphi\Big|_{t=\tau=0}$ is the initial velocity $\frac{h(|v+sw|)}{|v+sw|}(v+sw)$ of the geodesic between $x$ and $\sigma(s)$, it follows that $d(x,\sigma(s))=|h(|v+sw|)|$. Since we assume that $l'$ is odd (see the comment following Proposition \ref{a1a2}), (\ref{cross1-2}) becomes  

\begin{equation}\label{cross2}
\begin{split}
&-\partial^2_t\,l(d(\gamma(t),\sigma(s)))\Big|_{t=0}\\
&= H(t,s)\left< u, \partial_\tau\varphi\right>\Big|_{t=\tau=0} 
+ \frac{|v+sw|}{h(|v+sw|)} \left< u, \mathcal J(u,\sigma(s))\right>.
\end{split}
\end{equation}

If we apply Lemma \ref{cexp} again to the term involving $H$, we get 
\begin{equation}\label{cross3}
\begin{split}
&H(t,s)\left<u,\partial_\tau\varphi\right>\Big|_{t=\tau=0}\\
&= \left( -\frac{l''(d(x,\sigma(s)))}{d(x,\sigma(s))^2} + \frac{l'(d(x,\sigma(s)))}{d(x,\sigma(s))^3} \right)\cdot \left<u,\partial_\tau\varphi\right>^2\Big|_{t=\tau=0}.
\end{split}
\end{equation}

If we apply again the facts that $l'$ is odd, $\partial_\tau\varphi\Big|_{t=\tau=0}=\frac{h(|v+sw|)}{|v+sw|}(v+sw)$, and $d(x,\sigma(s)) = |h(|v+sw|)|$, then (\ref{cross3}) becomes 
\begin{equation}\label{cross4}
\begin{split}
&H(t,s)\left<u,\partial_\tau\varphi\right>\Big|_{t=\tau=0}\\
&= -\frac{l''(h(|v+sw|))\left<u,v+sw\right>^2}{|v+sw|^2} + \frac{\left<u,v+sw\right>^2}{|v+sw|h(|v+sw|)}.
\end{split}
\end{equation}

Since $h=(l')^{-1}$, we have $l''(h(s))=\frac{1}{h'(s)}$ and (\ref{cross4}) becomes 
\begin{equation}\label{cross5}
\begin{split}
&H(t,s)\left<u,\partial_\tau\varphi\right>\Big|_{t=\tau=0}\\
&= -\frac{\left<u,v+sw\right>^2}{h'(|v+sw|)|v+sw|^2} + \frac{\left<u,v+sw\right>^2}{|v+sw|h(|v+sw|)}.
\end{split}
\end{equation}

Therefore, we can combine this with (\ref{cross2}) and (\ref{cross5}). This finishes the proof of the theorem. 
\end{proof}

\

\section{The Ma-Trudinger-Wang curvature on space forms}

In this section, we assume that the manifold $M$ is a space form (i.e. a Riemannian manifold of constant sectional curvature). In this case, the Jacobi map $\mathcal J$, and hence the MTW curvature for the cost $c=l\circ d$, can be written down explicitly. To do this, let us fix a tangent vector $v$ in the tangent space $T_xM$ at the point $x$. For each vector $u$ in the same tangent space, we let $u_0$ and $u_1$ be the components of $u$ contained in the subspace spanned by $v$ and its orthogonal complement, respectively. Recall that $h$ is the inverse of the function $l'$.

\begin{thm}\label{MTWspace}
Let $A$ and $B$ be the functions defined by  
\[
A(z)=\frac{1}{h'(z)}, \quad
B(z)=
\begin{cases}
z\coth(h(z)) & \text{ if } K=-1, \\
\frac{z}{h(z)} & \text{ if } K=0, \\
z\cot(h(z)) & \text{ if } K=1.\\
\end{cases}
\]
Then the MTW curvature is given by
\begin{equation*}\label{ABprimes}
\begin{split}
&\MTW(u,v,w) =-\frac{3}{2}\Bigg[A''(|v|)|u_0|^2|w_0|^2+B''(|v|)|u_1|^2|w_0|^2 +\\
& +\frac{A'(|v|)}{|v|}(|u_0|^2|w_1|^2+4\left<u_0,w_0\right>\left<u_1,w_1\right>)+\\
&+\frac{B'(|v|)}{|v|}(|u_1|^2|w_1|^2-4\left<u_0,w_0\right>\left<u_1,w_1\right>) +\\
& +\frac{2(A(|v|)-B(|v|))}{|v|^2}(\left<u_1,w_1\right>^2 -|u_0|^2|w_1|^2-2\left<u_0,w_0\right>\left<u_1,w_1\right>)\Bigg].
\end{split}
\end{equation*}
\end{thm}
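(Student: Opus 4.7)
The plan is to apply Theorem \ref{cross} and then compute the Jacobi map explicitly on a space form. Fix $s$, set $z = |v+sw|$ and $L = h(z)$, and let $\tau \mapsto \varphi(\tau)$ be the constant-speed geodesic from $x$ to $\sigma(s)$, whose initial velocity has length $L$ in the direction $(v+sw)/z$. Decompose $u = u_0(s) + u_1(s)$ into its components parallel and perpendicular to $v+sw$. Since the sectional curvature is a constant $K$, the Jacobi equation along $\varphi$ decouples: the parallel component satisfies $D_\tau^2 J^\parallel = 0$, while the perpendicular component satisfies $D_\tau^2 J^\perp + K L^2 J^\perp = 0$. Solving with boundary data $J(0) = u$, $J(1) = 0$ gives
\[
\mathcal J(u, \sigma(s)) = -u_0(s) - r_K(L)\, u_1(s),
\]
where $r_K(L)$ equals $L\coth L$, $1$, or $L\cot L$ according as $K = -1, 0, 1$.

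I would then substitute this into the formula of Theorem \ref{cross}. In all three cases, the definition of $B$ gives
\[
\frac{z}{h(z)} \left<u, \mathcal J(u, \sigma(s))\right> = -\frac{z}{h(z)} |u_0(s)|^2 - B(z)\, |u_1(s)|^2,
\]
while the identity $\left<v+sw, u\right>^2 = z^2 |u_0(s)|^2$ shows that the third term inside the bracket of Theorem \ref{cross} equals $(z/h(z)) |u_0(s)|^2$ and cancels the first piece. The middle term reduces to $-A(z) |u_0(s)|^2$ using $1/h' = l'' \circ h$. Therefore
\[
\MTW(u, v, w) = -\frac{3}{2}\, \partial_s^2 \Big|_{s=0} \big[A(z(s))\, |u_0(s)|^2 + B(z(s))\, |u_1(s)|^2 \big],
\]
with $z(s) = |v+sw|$.

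The final step is a Taylor expansion at $s = 0$. Using $|u_0(s)|^2 + |u_1(s)|^2 = |u|^2$, rewrite the bracket as $B(z) |u|^2 + (A(z) - B(z)) |u_0(s)|^2$. Working in an orthonormal frame with first vector $v/|v|$ and writing $u^1, w^1$ for the first coordinates of $u$ and $w$, one obtains
\[
z(0) = |v|, \qquad (z'(0))^2 = |w_0|^2, \qquad z''(0) = \frac{|w_1|^2}{|v|},
\]
and from $|u_0(s)|^2 = \left<u, v+sw\right>^2 / |v+sw|^2$ the first two $s$-derivatives at $0$ are
\begin{align*}
\partial_s |u_0(s)|^2\big|_{s=0} &= \frac{2 u^1 \left<u_1, w_1\right>}{|v|}, \\
\partial_s^2 |u_0(s)|^2\big|_{s=0} &= \frac{2}{|v|^2} \big[ \left<u_1, w_1\right>^2 - |u_0|^2 |w_1|^2 - 2 \left<u_0, w_0\right> \left<u_1, w_1\right> \big].
\end{align*}
Expanding by the Leibniz rule, the $A'', B''$ contributions carry the factor $(z'(0))^2 = |w_0|^2$; the $A', B'$ contributions split into a part with factor $z''(0) = |w_1|^2/|v|$ and a mixed part with factor $z'(0) \cdot \partial_s |u_0(s)|^2|_{s=0} = 2 \left<u_0, w_0\right> \left<u_1, w_1\right>/|v|$, using $u^1 w^1 = \left<u_0, w_0\right>$; and the $A - B$ combination arises through $\partial_s^2 |u_0(s)|^2|_{s=0}$. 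Grouping these contributions produces exactly the six summands in the statement.

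The main obstacle is purely algebraic bookkeeping, especially the sign convention identifying the scalar $u^1 w^1$ with the signed inner product $\left<u_0, w_0\right>$. Every term in the stated formula that is linear in $\left<u_0, w_0\right>$ is also linear in $\left<u_1, w_1\right>$, so reversing the orientation of $v/|v|$ leaves the formula invariant, which makes the calculation self-consistent throughout.
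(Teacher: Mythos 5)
Your proposal is correct and follows essentially the same route as the paper: compute the Jacobi map on a space form by solving the decoupled Jacobi equation (the paper's Lemma \ref{Jacobimap}), substitute into Theorem \ref{cross} to reduce the MTW curvature to $-\tfrac{3}{2}\partial_s^2\big[A(|v+sw|)|u_0(s)|^2+B(|v+sw|)|u_1(s)|^2\big]$, and then expand in $s$ using the same derivative identities for $|v+sw|$ and $|u_0(s)|^2$. The intermediate values you quote (including the mixed term $z'(0)\,\partial_s|u_0(s)|^2|_{s=0}=2\left<u_0,w_0\right>\left<u_1,w_1\right>/|v|$) all match the paper's computation.
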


Let us begin the proof by writing down the formula for the Jacobi map in a space form. 

\begin{lem} \label{Jacobimap}
The Jacobi map $\mathcal J$ on a space form of sectional curvature $K$ is given by 
\[
\mathcal J(u,\exp(v)) = 
\begin{cases}
-u_0-|v|\coth(|v|)u_1 & \text{ if } K=-1, \\
-u & \text{ if } K=0, \\
-u_0-|v|\cot(|v|)u_1 & \text{ if } K=1. \\
\end{cases}
\]
\end{lem}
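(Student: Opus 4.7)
The plan is to solve the Jacobi equation (\ref{Jacobieqn}) explicitly by decoupling it into a tangential and a normal component, and then using the fact that on a space form the normal component satisfies an ODE with constant coefficients whose basis of solutions is trigonometric, linear, or hyperbolic depending on the sign of $K$.

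Let $\gamma(\tau)=\exp_x(\tau v)$, a geodesic of constant speed $|v|$ with $\gamma(0)=x$ and $\gamma(1)=\exp_x(v)$. Write the Jacobi field $J$ from the definition as $J=J^0+J^1$, where $J^0$ is tangent to $\dot\gamma$ at every point and $J^1$ is orthogonal to $\dot\gamma$. Since $\dot\gamma$ is parallel along $\gamma$, and the function $\tau\mapsto\langle J(\tau),\dot\gamma(\tau)\rangle$ is affine (as one checks by differentiating twice and using the symmetries of $R$), this decomposition persists along $\gamma$. By linearity of (\ref{Jacobieqn}), both $J^0$ and $J^1$ are themselves Jacobi fields, with boundary data $J^0(0)=u_0$, $J^1(0)=u_1$, $J^0(1)=J^1(1)=0$.

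For the tangential part, the symmetries of the curvature tensor give $R(\dot\gamma,J^0)\dot\gamma=0$, so (\ref{Jacobieqn}) collapses to $D^2_\tau J^0=0$. The unique solution with the prescribed boundary values is $J^0(\tau)=(1-\tau)u_0$ (parallel transported along $\gamma$), so $D_\tau J^0(0)=-u_0$, which accounts for the $-u_0$ term present in all three cases of the lemma.

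For the normal part, substituting the constant-curvature identity $R(X,Y)Z=K(\langle Y,Z\rangle X-\langle X,Z\rangle Y)$ (with the sign convention that makes $K=+1$ correspond to the round sphere) reduces (\ref{Jacobieqn}) to $D^2_\tau J^1+K|v|^2 J^1=0$. A basis of solutions is obtained by pairing parallel vector fields along $\gamma$ with the scalar functions $\{\sinh(|v|\tau),\cosh(|v|\tau)\}$ when $K=-1$, $\{\tau,1\}$ when $K=0$, and $\{\sin(|v|\tau),\cos(|v|\tau)\}$ when $K=1$. Imposing $J^1(0)=u_1$ and $J^1(1)=0$ determines the coefficients uniquely, and differentiating at $\tau=0$ yields $D_\tau J^1(0)=-|v|\coth(|v|)u_1$, $-u_1$, or $-|v|\cot(|v|)u_1$, respectively. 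Summing the tangential and normal contributions gives the formula. The only place requiring any care is fixing the sign in the curvature identity so that the normal Jacobi equation produces the expected oscillatory, linear, or exponential behavior; once that is done the remainder is a routine two-parameter linear ODE calculation.
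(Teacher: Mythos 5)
Your proof is correct and takes essentially the same route as the paper: both solve the Jacobi equation explicitly along $\tau\mapsto\exp_x(\tau v)$ using parallel-transported frames and the constant-curvature identity, then impose the boundary condition $J(1)=0$ and read off $D_{\dd\tau}J(0)$; the paper merely packages the tangential and normal parts into a single ansatz (written out for $K=-1$, the other cases "similar") instead of first splitting $J$ into tangential and normal Jacobi fields as you do. Your reduced equation $D^2_\tau J^1+K|v|^2J^1=0$ agrees with the sign conventions the paper actually uses (it verifies $R(w_1,w_2)w_1=-|w_1|^2w_2$ when $K=-1$), so the convention caveat you flag is resolved the right way and the resulting coefficients $-|v|\coth(|v|)$, $-1$, $-|v|\cot(|v|)$ are exactly those of the lemma.
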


\begin{proof} [Proof of Lemma \ref{Jacobimap}]
We will only give the proof for the case $K=-1$. The proofs for the other two cases are similar and will be omitted. 
Let $u_0(\tau)$ and $u_1(\tau)$ be the parallel translation of the vectors $u_0$ and $u_1$, respectively, along the geodesic $\tau\mapsto\exp(\tau v)$. Let $\tau\mapsto J(\tau)$ be the Jacobi field which satisfies $J(0)=u$, $J(1)=0$, and $J(\tau)\neq 0$. Recall that the Jacobi map $\mathcal J$ is given by 
\[
\mathcal J(u,\exp(v))=D_{\dd \tau}J\Big|_{\tau=0}. 
\]

Let $\mathcal J_0$ and $\mathcal J_1$ be the components of $\mathcal J(u,\exp(v))$ contained in the subspace spanned by $v$ and its orthogonal complement, respectively. Let $\mathcal J_0(\tau)$ and $\mathcal J_1(\tau)$ be the parallel translation of $\mathcal J_0$ and $\mathcal J_1$, respectively, along the geodesic $\tau\mapsto\exp(\tau v)$. We claim that the solution to the Jacobi equation (\ref{Jacobieqn}) with the initial values $J(0) = u$ and $D_{\dd\tau} J(0) = \mathcal J$ is given by 
\begin{equation}\label{Jacobimap1}
J(\tau) = 
u_0(\tau) + \tau\mathcal J_0(\tau) + \cosh(|v|\tau)u_1(\tau) + \frac{\sinh(|v|\tau)}{|v|}\mathcal J_1(\tau).
\end{equation}

The above equation clearly satisfies the initial conditions $J(0) = u$ and $D_{\dd\tau} J(0) = \mathcal J$. It remains to show that $J(\cdot)$ satisfies the Jacobi equation. Since the sectional curvature of the manifold is $-1$, the Riemann curvature $R$ satisfies 
\[
R(w_1,w_2)w_1=-|w_1|^2w_2
\]
for each tangent vector $w_1$ which is orthogonal to $w_2$. 

Therefore, if we denote the geodesic $\exp(\tau v)$ by $\gamma(\tau)$, then it follows that  
\[
D_{\dd\tau}D_{\dd\tau} J=|v|^2\left(\cosh(|v|\tau)u_1(\tau) + \frac{\sinh(|v|\tau)}{|v|}\mathcal J_1(\tau)\right)=-R(\dot\gamma,J)\dot\gamma. 
\]
This finishes the proof of the claim. 

Since $J(1)=0$, it follows from (\ref{Jacobimap1}) that
\[ 
\mathcal J_0(1)=-u_0(1),\quad   \mathcal J_1(1)=-|v|\coth(|v|)u_1(1).
\] 
Finally, since parallel translations are linear isomorphism, we have 
\[
\mathcal J=\mathcal J_0(0)+\mathcal J_1(0)=-u_0-|v|\coth(|v|)u_1. 
\]
\end{proof}

\begin{proof}[Proof of Theorem \ref{MTWspace}]
Let $u_0(s)$ and $u_1(s)$ be the components of $u$ contained in the subspace spanned by $v+sw$ and its orthogonal complement, respectively. It follows from Lemma \ref{Jacobimap} that
\[
\mathcal J(u,\sigma(s))=-u_0(s)-\frac{h(|v+sw|)}{|v+sw|}B(|v+sw|)u_1(s). 
\]

If we combine this with Theorem \ref{cross}, we get
\begin{equation}\label{AB}
\begin{split}
&\MTW(u,v,w)\\
&=-\frac{3}{2}\ppdd s \Bigg|_{s=0} \Bigg[A(|v+sw|)|u_0(s)|^2+B(|v+sw|)|u_1(s)|^2\Bigg].
\end{split}
\end{equation}

Since the vector $u$ is decomposed into two orthogonal components $u_0(s)$ and $u_1(s)$, we have $|u|^2=|u_0(s)|^2+|u_1(s)|^2$. After a long computation, (\ref{AB}) becomes
\begin{equation}\label{ABprimes}
\begin{split}
\MTW(u,v,w)
&=-\frac{3}{2}\Bigg[(A''(|v|)|u_0|^2+B''(|v|)|u_1|^2)V_1^2+\\
&+(A'(|v|)|u_0|^2+B'(|v|)|u_1|^2)V_2+\\
&+2(A'(|v|)-B'(|v|))\dd s|u_0(s)|^2\Bigg|_{s=0}V_1+\\
&+(A(|v|)-B(|v|))\ddtwo s|u_0(s)|^2\Bigg|_{s=0}\Bigg],
\end{split}
\end{equation}
where $V_1=\dd s|v+sw|\Bigg|_{s=0}$ and $V_2=\ddtwo s|v+sw|\Bigg|_{s=0}$. 

Another long calculation shows that
\[
\begin{split}
 &V_1=\dd s|v+sw|\Big|_{s=0}=\frac{\left<v,w_0\right>}{|v|}, \quad \\
 & V_2=\ddtwo s|v+sw|\Big|_{s=0}=\frac{|w_1|^2}{|v|},\\
 &\dd s|u_0(s)|^2\Big|_{s=0}=\frac{2\left<u_0,v\right>\left<u_1,w_1\right>}{|v|^2},\\
 &\ddtwo s|u_0(s)|^2\Big|_{s=0}=\frac{2}{|v|^2}(\left<u_1,w_1\right>^2 -|w_1|^2|u_0|^2 -2\left<u_0,w_0\right>\left<u_1,w_1\right>).
\end{split}
\]
Finally we combine these equations with (\ref{ABprimes}) and the result follows.
\end{proof}

\

\section{The Ma-Trudinger-Wang conditions on space forms}

In this section, we continue to investigate the MTW conditions for cost functions of the form $c=l\circ d$, where $d$ is a Riemannian distance function of a space form. We give computable conditions on the function $l$ which are equivalent to the MTW conditions \textbf{(A3w)} and \textbf{(A3s)}. 

Recall that $h$ is the inverse of the function $l'$. The functions $A$ and $B$ are defined by 
\[
A(z)=\frac{1}{h'(z)}, \quad B(z)=
\begin{cases}
z\coth(h(z)) & \text{ if } K=-1, \\
\frac{z}{h(z)} & \text{ if } K=0, \\
z\cot(h(z)) & \text{ if } K=1.\\
\end{cases}
\]

\begin{prop}\label{MTWsimple}
Assume that the tangent vectors $u$ and $w$ satisfy the orthogonality condition $\left<u,w\right>=0$, then the MTW curvature is given by 
\begin{align*}
\MTW(u,v,w) = -&\frac{3}{2}\Big[\alpha(|v|)|u_0|^2|w_0|^2 + \beta(|v|)|u_0|^2|w_1|^2+ \\
&+ \gamma(|v|)|u_1|^2|w_0|^2 + \delta(|v|)|u_1|^2|w_1|^2\Big],
\end{align*}
where $\alpha$, $\beta$, $\gamma$, and $\delta$ are functions defined by 
\begin{align*}
&\alpha(z) = \frac{z^2A''(z)+6(A(z)-B(z))-4z(A'(z)-B'(z))}{z^2},\\
&\beta(z) = \frac{zA'(z)-2(A(z)-B(z))}{z^2},\\ 
&\gamma(z) = B''(z), \\ 
&\delta(z) = \frac{B'(z)}{z}.
\end{align*}
\end{prop}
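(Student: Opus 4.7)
The plan is to read off the claimed identity directly from Theorem \ref{MTWspace} by exploiting the orthogonality $\langle u,w\rangle=0$ to collapse all mixed inner-product terms into pure norm-squared terms. No new geometric input is needed; the proof is a substitution followed by collecting coefficients.

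First I would observe that since $u_0,w_0$ lie in the one-dimensional subspace $\spn(v)$ while $u_1,w_1$ lie in its orthogonal complement, the decompositions are orthogonal and the hypothesis $\langle u,w\rangle=0$ gives
\[
\langle u_0,w_0\rangle+\langle u_1,w_1\rangle=0,\qquad\text{hence}\qquad \langle u_1,w_1\rangle=-\langle u_0,w_0\rangle.
\]
Because $u_0$ and $w_0$ are both proportional to $v$, one also has $\langle u_0,w_0\rangle^2=|u_0|^2|w_0|^2$. Combining these two remarks yields the two key identities
\[
\langle u_0,w_0\rangle\langle u_1,w_1\rangle=-|u_0|^2|w_0|^2,\qquad \langle u_1,w_1\rangle^2=|u_0|^2|w_0|^2,
\]
which are precisely what is needed to eliminate every term in the Theorem \ref{MTWspace} formula that is not already of one of the four monomial types $|u_i|^2|w_j|^2$.

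Next I would plug these relations into the five lines of the formula in Theorem \ref{MTWspace} and track the contribution to each of the four monomials. The terms $A''(|v|)|u_0|^2|w_0|^2$ and $B''(|v|)|u_1|^2|w_0|^2$ pass through unchanged, giving the $A''$ contribution to $\alpha$ and the full coefficient $\gamma=B''$. The line carrying $A'/|v|$ produces $\tfrac{A'(|v|)}{|v|}|u_0|^2|w_1|^2$ plus $-\tfrac{4A'(|v|)}{|v|}|u_0|^2|w_0|^2$, and the $B'/|v|$ line produces $\tfrac{B'(|v|)}{|v|}|u_1|^2|w_1|^2$ together with $+\tfrac{4B'(|v|)}{|v|}|u_0|^2|w_0|^2$; the first of these accounts for $\delta=B'/z$, while the cross contributions build up the $-4z(A'-B')$ part of $\alpha$ and the $zA'$ part of $\beta$. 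Finally the $2(A-B)/|v|^2$ line becomes $\tfrac{2(A-B)}{|v|^2}\bigl(3|u_0|^2|w_0|^2-|u_0|^2|w_1|^2\bigr)$, supplying the remaining $6(A-B)$ piece of $\alpha$ and the $-2(A-B)$ piece of $\beta$.

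Summing the coefficient of each of the four monomials gives exactly the functions $\alpha,\beta,\gamma,\delta$ as stated. I do not anticipate a genuine obstacle; the only thing to be careful about is the sign bookkeeping when substituting $\langle u_0,w_0\rangle\langle u_1,w_1\rangle=-|u_0|^2|w_0|^2$, since it flips the signs of the two ``$4\langle u_0,w_0\rangle\langle u_1,w_1\rangle$'' terms and the ``$-2\langle u_0,w_0\rangle\langle u_1,w_1\rangle$'' term in the last line. Once these signs are handled, the matching of coefficients is automatic.
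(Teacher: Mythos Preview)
Your proposal is correct and follows exactly the approach of the paper: the paper's proof simply notes that $\langle u_0,w_0\rangle+\langle u_1,w_1\rangle=0$ and says the result follows from Theorem~\ref{MTWspace}, while you have carefully written out the substitution and coefficient-matching that this entails. Your added observation that $\langle u_0,w_0\rangle^2=|u_0|^2|w_0|^2$ (since $u_0,w_0$ are collinear with $v$) is the only extra ingredient needed, and your bookkeeping of the coefficients of the four monomials is accurate.
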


\begin{proof}
Since $u$ and $w$ are orthogonal, we have $\left<u_0,w_0\right>+\left<u_1,w_1\right>=0$. The result follows from this and Theorem \ref{MTWspace}.
\end{proof}

Next, we look at the conditions on the function $l$ which are equivalent to the MTW conditions. The situation in the two dimensional and the higher dimensional cases are slightly different. Let us first state the result in two dimension.

\begin{thm} \label{2ineq}
Assume that the manifold $M$ has dimension two, then the cost $c=l \circ d$ satisfies the condition \textbf{(A3w)} on any subset $\mathcal M_1\times\mathcal M_2$ outside the cut locus $\text{cut}(M)$ if and only if 
the following inequalities hold for each $z$ in the interval $[0,|l'(D)|]$, where $D$ is the diameter of the manifold $M$:
\begin{enumerate}
\item $\beta(z),\gamma(z) \leq 0$,
\item $\alpha(z)+\delta(z)\leq 2\sqrt{\beta(z)\gamma(z)}$.
\end{enumerate}
In addition, if the above non-strict inequalities are replaced by strict inequalities, then it is equivalent to the cost $c$ being \textbf{(A3s)} on any subset $\mathcal M_1\times\mathcal M_2$ outside the cut locus $\text{cut}(M)$. 
\end{thm}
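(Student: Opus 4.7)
The plan is to exploit the fact that we are in dimension two: the orthogonal complement of $v$ in $T_xM$ is one-dimensional, so the orthogonality constraint $\langle u,w\rangle=0$ collapses into an algebraic identity between the component norms of $u$ and $w$, and the MTW inequality reduces to a two-variable quadratic form on the positive quadrant whose sign behavior is easy to analyze.

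First, I would apply Proposition \ref{MTWsimple} to express, under $\langle u,w\rangle=0$,
\[
\MTW(u,v,w) = -\tfrac{3}{2}\bigl[\alpha|u_0|^2|w_0|^2 + \beta|u_0|^2|w_1|^2 + \gamma|u_1|^2|w_0|^2 + \delta|u_1|^2|w_1|^2\bigr]
\]
with $\alpha,\beta,\gamma,\delta$ evaluated at $|v|$. Now use dimension two: since $u_1,w_1$ lie in a one-dimensional subspace, $|\langle u_1,w_1\rangle|=|u_1||w_1|$, so $\langle u,w\rangle=0$ forces $|u_0||w_0|=|u_1||w_1|$. Conversely, every tuple of non-negative component norms satisfying this product identity is realized by some $(u,w)$ with $\langle u,w\rangle=0$ after an appropriate sign choice. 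Introducing $p=|u_0||w_0|=|u_1||w_1|$, $q=|u_0||w_1|$, $r=|u_1||w_0|$, one has $p^2=qr$, and $(q,r)$ is free to range over $[0,\infty)^2$. Substituting $p^2=qr$ reduces \textbf{(A3w)} to
\[
\beta(|v|)\,q^2 + (\alpha(|v|)+\delta(|v|))\,qr + \gamma(|v|)\,r^2 \leq 0 \quad \text{for all } q,r \geq 0.
\]

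Next I would analyze this quadratic form on the positive quadrant. Testing $q=0$ and $r=0$ gives the necessary conditions $\gamma,\beta\leq 0$, i.e., condition (1). Granting these, dividing by $r^2$ and setting $x=q/r\geq 0$ reduces the problem to showing that the downward parabola $f(x)=\beta x^2+(\alpha+\delta)x+\gamma$ is non-positive on $x\geq 0$. If $\alpha+\delta\leq 0$, the vertex $x^{\ast}=-(\alpha+\delta)/(2\beta)$ satisfies $x^{\ast}\leq 0$, so $f$ is non-increasing on $[0,\infty)$ and $f\leq f(0)=\gamma\leq 0$ holds automatically, with $\alpha+\delta\leq 0\leq 2\sqrt{\beta\gamma}$ trivially. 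If $\alpha+\delta>0$, the vertex lies at $x^{\ast}>0$ and $f(x^{\ast})\leq 0$ becomes $(\alpha+\delta)^2\leq 4\beta\gamma$, i.e., condition (2). Since $|v|=|l'(d(x,y))|$ sweeps $[0,|l'(D)|]$ as $(x,y)$ ranges over admissible pairs off the cut locus, these pointwise conditions in $z$ are exactly what \textbf{(A3w)} on every $\mathcal M_1\times\mathcal M_2$ amounts to.

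The strong case \textbf{(A3s)} follows by rerunning the argument with strict inequalities; the only delicate point is that when both $u$ and $w$ are nonzero the corresponding pair $(q,r)$ need not have both coordinates positive, but still cannot vanish --- a short case analysis on whether $p=0$ or $p>0$ confirms this. The main obstacle is this bookkeeping step of matching the non-degeneracy ``$u,w\neq 0$'' against ``$(q,r)\neq (0,0)$''; everything else is standard two-variable quadratic form analysis.
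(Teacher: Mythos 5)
Your proof is correct and follows essentially the same route as the paper: reduce via Proposition \ref{MTWsimple} and the two-dimensional orthogonality constraint to the homogeneous quadratic form $\beta q^2+(\alpha+\delta)qr+\gamma r^2\le 0$ on the nonnegative quadrant (the paper parametrizes $u=av_0+bv_1$, $w=\lambda(bv_0-av_1)$, which yields the identical form with $q=a^2$, $r=b^2$ up to scale), and then do the elementary sign analysis. Your treatment of the quadratic case analysis and of the nondegeneracy bookkeeping for \textbf{(A3s)} is in fact more detailed than what the paper writes out, and it is sound.
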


\begin{proof}
By Proposition \ref{MTWsimple}, the cost $c=l \circ d$ satisfies the condition \textbf{(A3w)} on any subset $\mathcal M_1\times\mathcal M_2$ outside the cut locus $\text{cut}(M)$ if and only if 
\begin{equation}\label{ineq1}
\begin{split}
&\alpha(z)|u_0|^2|w_0|^2 + \beta(z)|u_0|^2|w_1|^2+ \\&+\gamma(z)|u_1|^2|w_0|^2 + \delta(z)|u_1|^2|w_1|^2\leq 0
\end{split}
\end{equation}
for each $z$ in the interval $[0,|l'(D)|]$ and each pair of tangent vectors $u$ and $w$ which are orthogonal $\left<u,w\right>=\left<u_0,w_0\right>+\left<u_1,w_1\right>=0$. 

First, assume that the dimension of the manifold $M$ is two. Let $\{ v_0=\frac{v}{|v|},v_1 \}$ be an orthonormal basis of the tangent space $T_xM$ and let $u=av_0+bv_1$. It follows from the orthogonality condition that the vector $w$ is of the form $w=\lambda(bv_0-av_1)$. If we substitute this into (\ref{ineq1}), then we have 
\[
\beta(z)a^4+(\alpha(z)+\delta(z))a^2b^2 + \gamma(z)b^4 \leq 0.
\]

This inequality, in turn, is equivalent to 
\[
\beta(z)\leq 0, \gamma(z)\leq 0, \alpha(z)+\delta(z)\leq 2\sqrt{\beta(z)\gamma(z)}. 
\]

A similar proof with all inequality replaced by strict inequality shows the second statement of the theorem on the condition (\textbf{A3s}).  
\end{proof}

When the manifold has dimension higher than two, there is an additional inequality on the function $\delta$ for the MTW condition. More precisely, 

\begin{thm} \label{3ineq}
Assume that the manifold $M$ has dimension greater than two, then the cost $c=l \circ d$ satisfies the condition \textbf{(A3w)} on any subset $\mathcal M_1\times\mathcal M_2$ outside the cut locus $\text{cut}(M)$ if and only if 
the following inequalities hold for each $z$ in the interval $[0,|l'(D)|]$, where $D$ is the diameter of the manifold $M$:
\begin{enumerate}
\item $\beta(z),\gamma(z),\delta(z)\leq 0$,
\item $\alpha(z)+\delta(z)\leq 2\sqrt{\beta(z)\gamma(z)}$.
\end{enumerate}
In addition, if the above non-strict inequalities are replaced by strict inequalities, then it is equivalent to the cost $c$ being \textbf{(A3s)} on any subset $\mathcal M_1\times\mathcal M_2$ outside the cut locus $\text{cut}(M)$. 
\end{thm}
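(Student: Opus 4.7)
The plan is to follow the argument of Theorem \ref{2ineq}, exploiting the fact that in dimension greater than two the orthogonal complement of $v$ has dimension at least two; this additional freedom is precisely what produces the new necessary condition $\delta(z)\leq 0$.

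By Proposition \ref{MTWsimple}, the condition \textbf{(A3w)} is equivalent to
\[
(\alpha(z)\,a^2+\gamma(z)\,b^2)\,s^2 \;+\;(\beta(z)\,a^2+\delta(z)\,b^2)\,r \;\leq\;0,
\]
where I have written $a=|u_0|$, $b=|u_1|$, $s$ for the signed component of $w_0$ along $v/|v|$, and $r=|w_1|^2$, with $z$ ranging over $[0,|l'(D)|]$. The orthogonality $\left<u,w\right>=0$ rewrites as $\left<u_1,w_1\right>=-as$ and, by Cauchy--Schwarz, forces $b^2 r\geq a^2 s^2$. Conversely, because $\dim M\geq 3$ the orthogonal complement of $v$ is at least two-dimensional, so every quadruple $(a,b,s,r)$ with $a,b,r\geq 0$ and $b^2 r\geq a^2 s^2$ is realized by some admissible pair $(u,w)$ (choose $w_1$ in the span of $u_1$ together with some direction perpendicular to both $u_1$ and $v$, prescribing its length and inner product with $u_1$).

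The expression is linear in $(s^2,r)$ on the convex cone
$\{(s^2,r):\,s^2,r\geq 0,\,b^2 r\geq a^2 s^2\}$,
whose extreme rays are $(0,1)$ and $(b^2,a^2)$. Hence the inequality on this cone is equivalent to its truth on the two extreme rays, namely
\begin{align*}
\beta(z)\,a^2+\delta(z)\,b^2 &\leq 0,\\
\beta(z)\,a^4+(\alpha(z)+\delta(z))\,a^2 b^2+\gamma(z)\,b^4 &\leq 0,
\end{align*}
for all $a,b\geq 0$. The first inequality, which is the new ingredient compared to the two-dimensional case (where no nonzero $w_1$ orthogonal to $u_1$ lies inside the orthogonal complement of $v$), gives exactly $\beta(z)\leq 0$ and $\delta(z)\leq 0$. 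The second is the biquadratic already analyzed in Theorem \ref{2ineq}; setting $t=a^2/b^2\geq 0$ and carrying out an elementary case analysis of the quadratic $\beta(z)t^2+(\alpha(z)+\delta(z))t+\gamma(z)\leq 0$ on $t\geq 0$ (splitting according to the sign of $\alpha(z)+\delta(z)$ and the vanishing of $\beta(z)$) yields $\gamma(z)\leq 0$ together with $\alpha(z)+\delta(z)\leq 2\sqrt{\beta(z)\gamma(z)}$.

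The converse direction is immediate from linearity: conditions (1) and (2) force the inequality on both extreme rays and hence on the whole cone. The characterization of \textbf{(A3s)} is obtained by running the same argument with strict inequalities, using that a non-positive linear function on a convex cone vanishes at a nonzero interior point only if it vanishes on an extreme ray, so that strictness on the two rays is equivalent to strictness on all nontrivial $(u,w)$. The main technical point, and the only real departure from the two-dimensional proof, is the realization step showing that in dimension $\geq 3$ the cone $\{b^2 r\geq a^2 s^2\}$ is genuinely saturated by admissible pairs, which is precisely what forces the extra condition $\delta(z)\leq 0$.
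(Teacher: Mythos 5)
Your proof is correct and follows essentially the same route as the paper: both reduce, for fixed $u$, to the two scalar conditions $\beta(z)a^2+\delta(z)b^2\leq 0$ and $\beta(z)a^4+(\alpha(z)+\delta(z))a^2b^2+\gamma(z)b^4\leq 0$ and then analyze the resulting quadratic. Your convex-cone/extreme-ray formulation is just a repackaging of the paper's parametrization $w'=\pm\frac{u'}{|u'|^2}+bu''$ with $b$ ranging over all reals (your ray $(b^2,a^2)$ is the paper's $b=0$ case and your ray $(0,1)$ is its leading $b^2$-coefficient), with the added benefit of treating the realization step and the degenerate cases $u_0=0$ or $w_0=0$ more explicitly.
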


\begin{proof}
If we assume that both $u_0$ and $w_0$ are nonzero and set $u'=\frac{u_1}{|u_0|}$ and $w'=\frac{w_1}{|w_0|}$, then (\ref{ineq1}) becomes 
\begin{equation}\label{ineq2}
\alpha(z) + \beta(z)|w'|^2+ \gamma(z)|u'|^2 + \delta(z)|u'|^2|w'|^2\leq 0. 
\end{equation}
The orthogonality condition $\left<u,w\right>=0$ becomes $\left<u',w'\right>=\pm 1$. 

Assume that the dimension of the manifold is great than two. Let $u''$ be a vector contained in the subspace spanned by $u'$ and $w'$ which satisfies $\left<u',u''\right>=0$ and $|u'|=|u''|$. By the orthogonality condition $\left<u',w'\right>=\pm 1$, we can let $w'=\pm \frac{u'}{|u'|^2}+bu''$. The length $|w'|$ of $w'$ is given by $|w'|^2=b^2|u'|^2+\frac{1}{|u'|^2}$. If we substitute this back into (\ref{ineq2}), then we have 
\[
\left(\beta(z)|u'|^2+\delta(z)|u'|^4\right)b^2+
\alpha(z) + \delta(z) + \frac{\beta(z)}{|u'|^2}+ \gamma(z)|u'|^2\leq 0. 
\]

The above inequality holds for all $b$ if and only if 
\[
\delta(z)|u'|^2+\beta(z)\leq 0,\quad \gamma(z)|u'|^4+(\alpha(z) + \delta(z))|u'|^2 + \beta(z)\leq 0. 
\]

This, in turn, holds for all $u'$ if and only if 
\[
\delta(z)\leq 0, \beta(z)\leq 0, \gamma(z)\leq 0
\]
and the quadratic equation $\gamma(z)x^2+(\alpha(z) + \delta(z))x + \beta(z)=0$ has at most one positive root. 

Finally the fact that the quadratic equation above has at most one positive root is equivalent to the condition $\alpha(z)+\delta(z)\leq 2\sqrt{\gamma(z)\beta(z)}$. 

A similar proof with all inequality replaced by strict inequality shows the second statement of the theorem on the condition (\textbf{A3s}).  
\end{proof}

\

\section{Perturbations of the Euclidean distance squared}

The cost $|x-y|^2$ given by square of the Euclidean distance has zero MTW curvature. Here we give an easily computable sufficient condition for a perturbation $l_\eps(|x-y|)$ of this cost to satisfy the condition \textbf{(A3s)}.

\begin{thm} \label{perturb}
Let $l_\eps (z)$ be a smooth family of smooth even functions which satisfy $l_0(z) = z^2/2$ and let $f$ be the function defined by 
\[
f(z)=\frac{\partial_\eps l'_\eps(z)}{z}\Bigg|_{\eps=0}.
\] 
Assume that there is a negative constant $k$ such that the following inequalities are satisfied for every $z$ in an interval $(0,b]$: 
\begin{enumerate}
\item $f''(z) < k$,
\item $\frac{z^2 f'''(z) - z f''(z) + 2 f'(z)}{z} < k$. 
\end{enumerate}
Then the costs $l_\eps(|x-y|)$ satisfy the conditions \textbf{(A0)-(A2)} and \textbf{(A3s)} on the set $\{ (x,y) \in \mathbb{R}^{2n}: |x-y| \leq b \}$ for all sufficiently small $\eps>0$. 
\end{thm}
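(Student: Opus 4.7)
The plan is to invoke Theorem~\ref{3ineq} (or Theorem~\ref{2ineq} if $n=2$) with $K=0$. Since $\mathbb{R}^n$ has no cut locus and $l''_0\equiv 1$, Proposition~\ref{a1a2} supplies (A0)--(A2) automatically once $\eps$ is small enough. Everything therefore reduces to verifying the \emph{strict} forms of the inequalities (1)--(2) for the four functions $\alpha_\eps,\beta_\eps,\gamma_\eps,\delta_\eps$ attached to the cost $l_\eps\circ d$ (in the $K=0$ expressions recalled at the beginning of Section~5), on the entire interval $[0,l'_\eps(b)]$.

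The first main step is a first-order expansion in $\eps$. Starting from $l'_\eps(z) = z + \eps\, z f(z) + O(\eps^2)$ and inverting gives $h_\eps(z) = z - \eps\, z f(z) + O(\eps^2)$, hence
\[
A_\eps(z) = 1 + \eps(f + z f') + O(\eps^2),\qquad B_\eps(z) = 1 + \eps\, f + O(\eps^2).
\]
Substituting into the formulas for $\alpha,\beta,\gamma,\delta$ from Proposition~\ref{MTWsimple} and simplifying yields
\[
\alpha_\eps = \eps\,\tilde\alpha + O(\eps^2),\quad \beta_\eps = \gamma_\eps = \eps\, f'' + O(\eps^2),\quad \delta_\eps = \eps\,\tfrac{f'}{z} + O(\eps^2),
\]
where $\tilde\alpha(z) := (z^2 f'''(z) - z f''(z) + 2 f'(z))/z$, and the error terms are uniform on any fixed compact interval since $l_\eps$ depends smoothly on $\eps$. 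Because $l_\eps$ (and hence $f$) is even, one has $f'(0)=0$, so each of the quotients above extends continuously to $z=0$ with value $f''(0)$.

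The second main step is to check the strict limiting inequalities. Integrating hypothesis~(1) from $0$ using $f'(0)=0$ gives $f'(z) < k z$ on $(0,b]$, whence $f'(z)/z < k$ on $(0,b]$, with limit $f''(0)\le k$ at the endpoint. Thus each of the leading-order coefficients $\tilde\alpha$, $f''$ (twice) and $f'/z$ is at most $k<0$ on $[0,b]$. For the cross inequality $\alpha_\eps + \delta_\eps < 2\sqrt{\beta_\eps\gamma_\eps}$, note that $\beta_\eps,\gamma_\eps<0$ at this order, so $\sqrt{\beta_\eps\gamma_\eps} = -\eps\, f'' + O(\eps^2)$ and the inequality reduces at leading order to
\[
\tilde\alpha(z) + \tfrac{f'(z)}{z} + 2 f''(z) < 0,
\]
whose left-hand side is bounded above by $4k<0$. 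A uniform compactness argument on $[0,2b]$, together with $l'_\eps(b)\to b$, then lifts all four strict inequalities to the full interval $[0,l'_\eps(b)]$ for $\eps$ small, and Theorem~\ref{3ineq} produces (A3s).

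The main obstacle is really only the bookkeeping in the first-order expansion of $\alpha_\eps,\beta_\eps,\gamma_\eps,\delta_\eps$; everything conceptual is routine once that expansion is in hand. The one subtlety is the behaviour at $z=0$, where the apparent $0/0$ in $f'/z$ and $\tilde\alpha$ is resolved by the even symmetry of $l_\eps$. In dimension two the argument uses Theorem~\ref{2ineq} instead, but it is unchanged because the same reasoning already delivers $\delta_\eps<0$.
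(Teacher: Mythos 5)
Your proposal is correct and follows essentially the same route as the paper: a first-order expansion in $\eps$ of $h_\eps$, $A_\eps$, $B_\eps$ yielding $\partial_\eps\beta_\eps\big|_{\eps=0}=\partial_\eps\gamma_\eps\big|_{\eps=0}=f''$, $\partial_\eps\delta_\eps\big|_{\eps=0}=f'/z$, and $\partial_\eps\alpha_\eps\big|_{\eps=0}=(z^2f'''-zf''+2f')/z$, all strictly negative by the hypotheses, followed by an application of Theorem \ref{3ineq} in its strict form. The only cosmetic differences are that the paper obtains the cross inequality $\alpha_\eps+\delta_\eps<2\sqrt{\beta_\eps\gamma_\eps}$ for free from the strict negativity of all four coefficients (since then $\alpha_\eps+\delta_\eps<0\le 2\sqrt{\beta_\eps\gamma_\eps}$) instead of expanding it to first order, and it does not spell out the $z=0$ and endpoint-uniformity details that you include.
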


\begin{proof}
Since $l_0''=1$, the functions $l_\eps$ has positive second derivative on the interval $[0,b]$ for all small enough $\eps\geq 0$. 
It follows from Proposition \ref{a1a2} that the cost $l_\eps(|x-y|)$ satisfies the conditions \textbf{(A0)-(A2)} on the set $\{ (x,y) \in \mathbb{R}^{2n}: |x-y| \leq b \}$ for all small enough $\eps$. Let $A_\eps, B_\eps, \alpha_\eps, \beta_\eps, \gamma_\eps, \delta_\eps, h_\eps$ be the functions $A, B, \alpha, \beta, \gamma, \delta, h$, respectively, defined in section 5 with the function $l$ replaced by $l_\eps$. Note that $\alpha_0(z) = \beta_0(z) = \gamma_0(z) = \delta_0(z) = 0$. Therefore, if we can show that the quantities $\partial_\eps\alpha_\eps\Big|_{\eps=0}, \partial_\eps\beta_\eps\Big|_{\eps=0}, \partial_\eps\gamma_\eps\Big|_{\eps=0}, \partial_\eps\delta_\eps\Big|_{\eps=0}$ are all negative on the interval $[0,b]$, then we can apply Theorem \ref{3ineq} and conclude that the costs $l_\eps(|x-y|)$ satisfy the strong MTW condition on the set $\{ (x,y) \in \mathbb{R}^{2n}: |x-y| \leq b \}$ for all sufficiently small $\eps>0$. 

To do this, let us first compute $\partial_\eps B'_\eps(z)\Big|_{\eps=0}$ and $\partial_\eps B''_\eps(z)\Big|_{\eps=0}$. If we differentiate the identity $l'_\eps(h_\eps(z))=z$ with respect to $\eps$, we get 
\begin{equation}\label{perturb1}
\partial_\eps h_\eps\Big|_{\eps=0}= -\partial_\eps l'_\eps\Big|_{\eps=0}.
\end{equation}

It follows from this and the definition of the function $B_\eps$ that 
\begin{equation}\label{perturb2}
\partial_\eps B_\eps\Big|_{\eps=0}  = f. 
\end{equation}

Therefore, we have $\partial_\eps B_\eps'\Big|_{\eps=0}= f'$ and $\partial_\eps B_\eps''\Big|_{\eps=0} = f''$. It follows from this and $f''<0$ that $\partial_\eps\gamma_\eps\Big|_{\eps=0}$ is negative. Note that since $f''<k<0$, it also follows that $\partial_\eps\delta_\eps(z)\Big|_{\eps=0}=\frac{f'(z)}{z}<0$. 

By (\ref{perturb1}) and the definition of the function $A_\eps$, we have 
\begin{equation}\label{perturb3}
\partial_\eps A_\eps(z)\Big|_{\eps=0}= \partial_\eps l''_0(z)=zf'(z)+f(z). 
\end{equation}

If we apply (\ref{perturb2}) and (\ref{perturb3}) to the definition of the function $\alpha_\eps$ and $\beta_\eps$, then a calculation yields 
\[
\partial_\eps \alpha_\eps(z)\Big|_{\eps=0}=\frac{z^3 f'''(z) - z^2 f''(z) + 2z f'(z)}{z^2}<k<0
\]
and
\[
\partial_\eps \beta_\eps(z)\Big|_{\eps=0} = f''(z) < k<0. 
\]  
This finishes the proof of the theorem. 
\end{proof}

\

\section{Appendix}

In this section, we give the proof of Proposition \ref{a1a2}. Let us begin by the following known result. A proof is given for completeness. 

\begin{lem}\label{cexp}
Let $x$, $y$ be a pair of points which can be connected by a unique minimizing geodesic and let $f:M\to\Real$ be the function $f(z)=\frac{1}{2}d^2(z,y)$. Then the initial velocity of the minimizing geodesic starting from $x$ and ending at $y$ is given by $-\nabla f(x)$. In other words, we have 
\[
\exp(-\nabla f(x))=y.
\]
\end{lem}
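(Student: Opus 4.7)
My plan is to prove this by reducing the identity $\exp_x(-\nabla f(x)) = y$ to the computation of the differential $df_x$, and then identifying $df_x(v)$ via the first variation of energy.

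Let $\gamma:[0,1]\to M$ be the unique minimizing geodesic with $\gamma(0)=x$ and $\gamma(1)=y$; since geodesics have constant speed and length equals distance, $|\dot\gamma(0)|=d(x,y)$ and $E(\gamma):=\int_0^1 |\dot\gamma|^2\,d\tau = d(x,y)^2 = 2f(x)$. The goal is to show $\nabla f(x) = -\dot\gamma(0)$, since then $\exp_x(-\nabla f(x))=\exp_x(\dot\gamma(0))=\gamma(1)=y$.

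To compute $\nabla f(x)$, fix $v\in T_xM$ and choose a smooth curve $c(s)$ with $c(0)=x$, $\dot c(0)=v$. Since $(x,y)$ lies outside the cut locus, the exponential map at $x$ is a local diffeomorphism near $\dot\gamma(0)$, so for $s$ sufficiently small there is a smooth family of minimizing geodesics $\gamma_s:[0,1]\to M$ with $\gamma_s(0)=c(s)$ and $\gamma_s(1)=y$, depending smoothly on $s$ (obtained, for instance, by inverting $\exp_{c(s)}$ near $\dot\gamma(0)$). Then $f(c(s))=\tfrac12 E(\gamma_s)$ for $s$ near $0$, so $df_x(v) = \tfrac12 \frac{d}{ds}\big|_{s=0} E(\gamma_s)$.

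Now I apply the first variation formula for energy: letting $V(\tau)=\partial_s\gamma_s(\tau)\big|_{s=0}$ be the variation field, with $V(0)=v$ and $V(1)=0$, and using $\nabla_{\dot\gamma}\dot\gamma=0$,
\[
\tfrac{d}{ds}\big|_{s=0} E(\gamma_s) = 2\langle V(1),\dot\gamma(1)\rangle - 2\langle V(0),\dot\gamma(0)\rangle = -2\langle v,\dot\gamma(0)\rangle.
\]
Hence $df_x(v) = -\langle \dot\gamma(0),v\rangle$ for every $v$, so $\nabla f(x)=-\dot\gamma(0)$, which gives the claim.

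The only mild obstacle is justifying the smooth dependence of $\gamma_s$ on $s$, but this is immediate from the uniqueness of the minimizing geodesic together with the local invertibility of $\exp_x$ off the cut locus (equivalently, from the smoothness of $d^2/2$ off the cut locus, which is a standard fact); all remaining steps are routine applications of the first variation formula.
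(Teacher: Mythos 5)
Your proof is correct and follows essentially the same route as the paper: both identify $df_x(v)=-\langle\dot\gamma(0),v\rangle$ by applying the first variation of energy to a variation of curves ending at the fixed point $y$. The only difference is that you justify the identity $f(c(s))=\tfrac12 E(\gamma_s)$ explicitly by constructing the smooth family of minimizing geodesics via local invertibility of the exponential map, a point the paper's proof leaves implicit when it equates $\tfrac12 d^2(\varphi(0,s),y)$ with the energy of the varied curves.
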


\begin{proof}
Let $t\mapsto\gamma(t)$ be the unique minimizing geodesic satisfying $\gamma(0)=x$ and $\gamma(1)=y$. We need to show that $\dd t\gamma(0)=-\nabla f(x)$. To do this, let $\varphi(t,s)$ be a variation which satisfies the condition $\varphi(t,0)=\gamma(t)$ and $\varphi(1,s)=y$. If we set $u=\dd s\varphi\Big|_{t=s=0}$, then we have 
\[
df(u)=\dd s\frac{1}{2}d^2(\varphi(0,s),y)\Big|_{s=0}=\dd s\frac{1}{2}\int_0^1\left|\dd t\varphi\right|^2dt\Big|_{s=0}. 
\]

Since $t\mapsto \varphi(t,0)=\gamma(t)$ is a geodesic, the above equation becomes 
\[
df(u)=\int_0^1\left<D_{\dd s}\dd t\varphi,\dd t\varphi\right>dt\Big|_{s=0} =\int_0^1\dd t\left<\dd s\varphi,\dd t\varphi\right>dt\Big|_{s=0}. 
\]

Since $\varphi(1,s)=y$ is independent of the variable $s$, we get the following 
\[
df(u) =-\left<\dd t\gamma\Big|_{t=0},u\right>. 
\]
Since the above equation holds for all tangent vector $u$ in the tangent space $T_xM$, the result follows. 
\end{proof}

\begin{proof}[Proof of Proposition \ref{a1a2}]

Since $d^2$ is smooth outside the cut locus (i.e. smooth on $M \times M \setminus \text{cut}(M)$) and the function $s\mapsto l(\sqrt s)$ is smooth, the cost $c=l\circ d$ satisfies condition $\textbf{(A0)}$. 

Let $(x,y)$ be a pair of points which are not contained in the cut locus $\text{cut}(M)$. Then there is a unique minimizing geodesic $\gamma(\cdot)$ which starts at the point $x$ and ends at the point $y$. Let $u=\dot\gamma(0)$ be the initial velocity of this geodesic. In other words, we have $\exp_x(u)=y$. Let $U_x$ be the open subset of the tangent space $T_xM$ on which the exponential map $\exp_x$ is a diffeomorphism onto its image. Let us denote this inverse by $\exp_x^{-1}$. Since $(x,y)$ is not contained in the cut locus, we also have $u=(\exp_x)^{-1}y$. By Lemma \ref{cexp}, we have 
\begin{equation}\label{a1a21}
-\partial_xc(x,\exp(u))=-\partial_x(l\circ d)(x,\exp(u))=\frac{l'(|u|)}{|u|}u. 
\end{equation}

By the assumption on the second derivative of the function $l$, we can define the inverse of the derivative $l'$ as $h$. If we substitute $u=\frac{h(|v|)}{|v|}v$ into (\ref{a1a21}), then we have 
\[
-\partial_xc\left(x,\exp\left(\frac{h(|v|)}{|v|}v\right)\right)=v. 
\]
It follows that $y\mapsto -\partial_xc(x,y)$ is injective and the cost $c=l\circ d$ satisfies the condition \textbf{(A1)}. 

Therefore, the $c$-exponential map $\cexp$ is given by 
\[
\cexp(v)=\exp\left(\frac{h(|v|)}{|v|}v\right). 
\]
Since the function $h$ is a smooth odd function, the $c$-exponential map is smooth. Therefore, the cost satisfies the condition \textbf{(A2)}.
\end{proof}

\section*{Acknowledgment}
We thank Professor Robert McCann for his interest in this work and various fruitful discussions with us.

\end{document}